\documentclass[12pt]{amsart}
\setlength{\topmargin}{-0.05in} 
\setlength{\oddsidemargin}{0.2in}
\setlength{\evensidemargin}{0.2in}
\setlength{\textheight}{8in}
\setlength{\textwidth}{6.1in} 

\setlength{\parskip}{0.25in}

\usepackage{amssymb,latexsym,amsmath,amsfonts}
\usepackage[mathscr]{eucal}
\usepackage{mathrsfs}
\usepackage{graphicx}
\usepackage{amscd}
\usepackage{amsthm}
\usepackage{amsxtra}
\usepackage[nointegrals]{wasysym}
\usepackage{bm}
\usepackage{float}
\usepackage[usenames,dvipsnames]{xcolor}
\usepackage{hyperref}
\hypersetup{colorlinks=true, citecolor=MidnightBlue, linkcolor=BrickRed}

\numberwithin{equation}{section}
\theoremstyle{definition}
\newtheorem{definition}{Definition}[section]
\theoremstyle{definition}
\theoremstyle{plain}
\newtheorem{theorem}[definition]{Theorem}
\newtheorem{cor}[definition]{Corollary}

\newcommand{\beas}{\begin{eqnarray*}}
\newcommand{\eeas}{\end{eqnarray*}}
\newcommand{\bes} {\begin{equation*}}
\newcommand{\ees} {\end{equation*}}
\newcommand{\be} {\begin{equation}}
\newcommand{\ee} {\end{equation}}
\newcommand{\bea} {\begin{eqnarray}}
\newcommand{\eea} {\end{eqnarray}}

\newcommand{\bdy}{\partial}
\newcommand{\Om}{\Omega}
\newcommand{\D}{\mathbb{D}}
\newcommand{\cont}{\mathcal{C}}
\newcommand\hR[1]{\widehat{#1}_{\operatorname{rat}}}
\newcommand\wt[1]{\widetilde{#1}}

\newcommand{\hol}{\mathcal{H}}
\newcommand{\Po}{\mathcal{P}}
\newcommand{\R}{\mathcal{R}}
\newcommand{\T}{\mathcal{T}}
\newcommand{\Z}{\mathcal{Z}}
\newcommand{\CC}{\mathbb{C}^2}
\newcommand{\Cn}{\mathbb{C}^n}
\newcommand{\C} {\mathbb{C}} 
\newcommand{\rl}{\mathbb{R}}

\newcommand\EatDot[1]{}

\begin{document}
\title[Rudin's Klein bottle]{The Rational hull of Rudin's Klein bottle}
\author{John T. Anderson}

\address{Department of Mathematics and Computer Science, College of the Holy Cross,
Worcester, Massachusetts 01610}
\email{anderson@mathcs.holycross.edu} 

\author{Purvi Gupta}

\address{Department of Mathematics, Rutgers University\\
New Brunswick, New Jersey 08854}
\email{purvi.gupta@rutgers.edu }

\author{Edgar L. Stout}

\address{Department of Mathematics, University of Washington,
Seattle, Washington 98195}
\email{stout@math.washington.edu}

\begin{abstract}
In this note, a general result for determining the rational hulls of fibered sets in $\CC$ is established. We use this to compute the rational hull of Rudin's Klein bottle, the first explicit example of a totally real nonorientable surface in $\CC$. In contrast to its polynomial hull, which was shown to contain an open set by the first author in 2012, its rational hull is shown to be two-dimensional. Using the same method, we also compute the rational hulls of some other surfaces in $\CC$. 	
\end{abstract}
\maketitle
\section{Introduction}\label{sec_intro}

In \cite{Ru81}, Rudin gave an explicit smooth embedding of the Klein bottle into $\CC$ so that the image is {\em totally real}, i.e., no tangent space to the image in $\CC$ contains a nontrivial complex subspace. Later, Givental \cite{Gi86} constructed totally real embeddings into $\CC$ of all nonorientable surfaces that are connected sums of $n$ Klein bottles, where $n$ is odd and at least three. In fact, Givental's embeddings are {\em Lagrangian}, i.e., the pull-backs of the standard K{\"a}hler form via the embeddings vanish. The question of whether or not the Klein bottle admits {\em any} Lagrangian embedding into $\CC$ was settled much later (in the negative) by Shevchishin (see \cite{Sh09}). This places certain constraints on the convexity properties of totally real Klein bottles in $\CC$, as discussed below.   

Given a compact set $X\subset\Cn$, its {\em rational hull} is defined as 
	\bes
		\hR X=\{z\in\Cn: P(z)\in P(X)\ \text{for all polynomials}\ P:\Cn\rightarrow\C \}.
	\ees
The set $X$ is said to be {\em rationally convex} if its rational hull is trivial, i.e., $\hR X=X$. One can alternatively identify $\hR X$ with the maximal ideal space of the {\em rational algebra of $X$}, which is defined as  
	\bes
		\R(X)=\left\{f\in\cont(X):
			\!\begin{array}{c}
			f\ \text{is the uniform limit on $X$ of}\\
			 \text{rational functions with poles off $X$}
			\end{array}\!\right\}. 
	\ees
We note that if $\cont(X)=\R(X)$, then $X$ is rationally convex. If, further, $X\subset\Cn$ is a smooth totally real submanifold, then the converse is also true (see \cite[Section 6.3]{St07}). In \cite{DuSi95}, Duval and Sibony show that for any $n$-dimensional totally real submanifold of $\Cn$, being rationally convex is equivalent to being Lagrangian with respect to some K{\"a}hler form on $\Cn$. So, although there are rationally convex topological Klein bottles in $\CC$ (see \cite{ShSu15}), by Shevchishin's result, every totally real Klein bottle in $\CC$ must have a nontrivial rational hull. It is natural to ask whether there is a constraint on the dimension of this hull. In this paper, we show that the rational hull of Rudin's Klein bottle is $2$-dimensional. More precisely, the hull consists of the Klein bottle and an attached analytic annulus (see Section~\ref{sec_klein1}). We also characterize the rational algebra of Rudin's Klein bottle.

A similar question regarding the polynomial hull of Rudin's Klein bottle is addressed in \cite{An12}. The {\em polynomial hull} of a compact set $X\subset\Cn$ is
	\bes
		\{z\in\Cn:|P(z)|\leq \sup_X|P|\ \text{for all polynomials}\ P:\Cn\rightarrow\C\}.
	\ees
It is known \cite{Al72} that the polynomial hull of any compact $n$-dimensional manifold in $\Cn$ must be of dimension at least $n+1$. In \cite{An12}, the first author shows that the polynomial hull of Rudin's Klein bottle contains an open set in $\CC$. On the other hand, he produces a totally real Klein bottle in $\CC$ whose polynomial hull is of dimension $3$. In spite of admitting a smaller polynomial hull, the rational hull of his Klein bottle is qualitatively the same as that of Rudin's Klein bottle since they are equivalent under a rational automorphism. (See Section~\ref{sec_klein1} for details.) 

To compute the rational hull of Rudin's Klein bottle, we will establish a general result in the fashion of the following well-known criterion for fibered sets in $\Cn$ (see \cite[Theorem 1.2.16]{St07}): {\em If $X\subset\Cn$ is compact and if $f\in\Po(X)$ satisfies $\Po(f(X))=\cont(f(X))$, then $X$ is polynomially convex if and only if each fiber $f^{-1}(t)$, $t\in f(X)$, is polynomially
convex. If $X$ is polynomially convex, then $\Po(X)=\cont(X)$ if and only if for each $t\in f(X)$, $\Po(f^{-1}(t))=\cont(f^{-1}(t))$.} Here $\Po(X)$ denotes the algebra of functions on $X$ that are uniformly approximable on $X$ by polynomials.

The analogous statement for rational convexity is not true in general; the surfaces considered in this paper are counterexamples. The key is to impose a stronger condition than that of rational convexity of fibers --- namely, the rational convexity of fibers with respect to a common analytic curve. Given an analytic curve $V\subset\CC$, a compact set $X\subset\CC\setminus V$, and an entire function $F$ so that $V=\{F=0\}$, we denote by $\R_V(X)$ the uniform closure in $\cont(X)$ of the set 
	\be\label{eq_alg}
		\left\{\left.\frac{G}{F^m}\right|X: G\ \text{is entire}, m\in\mathbb N\right\}. 
	\ee
Note that $\R_V(X)\subset\R(X)$. We now state the general result.
\begin{theorem}\label{thm_main} Let $X\subset\CC$ be a compact set and $f:\CC\rightarrow\C$ be a rational function with no poles on $X$. Suppose $\Gamma=f(X)$ satisfies $\R(\Gamma)=\cont(\Gamma)$.  For $t\in\Gamma$, let $X_t=f^{-1}(t)\cap X$. Let $V$ be an analytic curve in $\CC$ that avoids $X$.
 Then, 
	\bes
	\hR X\subseteq X\cup\bigcup_{t\in\Gamma}\Om_t
	\ees
where $\Om_t$ denotes the union of all the bounded components of $f^{-1}(t)\setminus X_t$ that avoid $V$. Furthermore,  
	\bes
		\R(X)\supseteq\{\psi\in\cont(X):\psi|{X_t}\in \R_V(X_t)\ \text{for all}\ t\in\Gamma\}.
	\ees
In particular, if $\R_V(X_t)=\cont(X_t)$ for all $t\in \Gamma$, then $\R(X)=\cont(X)$ (and $X$ is rationally convex). 
\end{theorem}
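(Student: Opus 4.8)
The plan is to deduce the final assertion from a fiberwise localization argument over $\Gamma$, in the spirit of the polynomial fibered criterion recalled above, applied to the algebra $\mathcal A:=\R(X)$ --- a closed subalgebra of $\cont(X)$ containing the polynomials. The argument reduces to two facts: (a) a localization lemma, stating that $\psi\in\cont(X)$ belongs to $\mathcal A$ whenever each restriction $\psi|_{X_t}$ belongs to the uniform closure of $\mathcal A|_{X_t}$ in $\cont(X_t)$; and (b) the inclusion $\R_V(X_t)\subseteq\overline{\mathcal A|_{X_t}}$ for every $t\in\Gamma$ (closure in $\cont(X_t)$). Granting (a) and (b), the hypothesis $\R_V(X_t)=\cont(X_t)$ forces $\overline{\mathcal A|_{X_t}}=\cont(X_t)$ for all $t$, so by (a) every $\psi\in\cont(X)$ lies in $\mathcal A$; hence $\R(X)=\cont(X)$, and rational convexity of $X$ then follows from the remark in the introduction that $\cont(X)=\R(X)$ implies $\hR X=X$. (These same two facts also give the displayed inclusion $\R(X)\supseteq\{\psi:\psi|_{X_t}\in\R_V(X_t)\ \text{for all}\ t\}$: apply (a) to such a $\psi$ via (b).)

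For the localization lemma (a), the key point is that $\mathcal A$ contains $g\circ f$ for every $g\in\cont(\Gamma)$: indeed $f\in\R(X)$ since $f$ is rational with no poles on $X$, and given $g\in\cont(\Gamma)$ one uses $\R(\Gamma)=\cont(\Gamma)$ to pick rational functions $r_j\to g$ uniformly on $\Gamma=f(X)$ with poles off $\Gamma$, whence $r_j\circ f$ is a rational function with no poles on $X$ and $r_j\circ f\to g\circ f$ uniformly on $X$. Now fix $\psi$ as in (a) and $\varepsilon>0$. For each $t\in\Gamma$ choose $h_t\in\mathcal A$ with $\|\psi-h_t\|_{X_t}<\varepsilon$; by continuity of $f$ and compactness of $X$ there is a neighborhood $U_t$ of $t$ in $\Gamma$ with $|\psi-h_t|<\varepsilon$ on $f^{-1}(U_t)\cap X$. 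Passing to a finite subcover $U_{t_1},\dots,U_{t_k}$ of $\Gamma$ and a continuous partition of unity $\{\chi_j\}$ on $\Gamma$ subordinate to it, the function $h:=\sum_{j}(\chi_j\circ f)\,h_{t_j}$ lies in $\mathcal A$ (each $\chi_j\circ f\in\mathcal A$ and $\mathcal A$ is an algebra), and for $x\in X$ with $f(x)=t$ we get $|\psi(x)-h(x)|\le\sum_{j}\chi_j(t)\,|\psi(x)-h_{t_j}(x)|<\varepsilon$, since $\chi_j(t)\neq0$ forces $t\in U_{t_j}$. Thus $\psi\in\overline{\mathcal A}=\mathcal A$.

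For (b), I would first check that $1/F^m|_X\in\R(X)$ for every $m\in\mathbb N$. Since $V$ avoids $X$, $F$ is zero-free on the compact set $X$; choosing polynomials $p_k\to F$ uniformly on $X$, the $p_k$ are zero-free on $X$ for $k$ large and $1/p_k^m\to 1/F^m$ uniformly on $X$, while each $1/p_k^m$ is a rational function with poles off $X$, so the limit lies in $\R(X)$. Next, for an entire $G$ choose polynomials $q_k\to G$ uniformly on $X$ (e.g. Taylor polynomials); then $q_k\cdot(1/F^m|_X)\in\R(X)$, because $\R(X)$ is an algebra containing the polynomials, and $q_k\cdot(1/F^m|_X)\to G/F^m|_X$ uniformly on $X$, so $G/F^m|_X\in\R(X)$. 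Restricting to $X_t$ shows $G/F^m|_{X_t}\in\R(X)|_{X_t}$ for all entire $G$ and all $m$; taking uniform closures in $\cont(X_t)$ gives $\R_V(X_t)\subseteq\overline{\R(X)|_{X_t}}$.

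I do not expect a serious conceptual obstacle; the care is entirely in the bookkeeping, namely in verifying that each auxiliary function ($r_j\circ f$, $\chi_j\circ f$, $1/p_k^m$, $q_k\cdot(1/F^m|_X)$) really belongs to $\R(X)$. This is exactly where the three structural hypotheses enter: rationality of $f$ with no poles on $X$ makes $f$ and the $r_j\circ f$ admissible; $\R(\Gamma)=\cont(\Gamma)$ lets the partition of unity be pulled back into $\mathcal A$; and the disjointness of $V$ from $X$ makes $1/F$ a uniform limit on $X$ of admissible rational functions. It is worth noting why the weaker hypothesis that $\R(X_t)=\cont(X_t)$ does not suffice for this scheme: a rational function with poles off $X_t$ may still have poles elsewhere on $X$, so its restriction to $X_t$ need not be approachable from $\R(X)$, whereas the generators $G/F^m$ of $\R_V(X_t)$ have all their poles confined to the single curve $V$, which misses every fiber simultaneously.
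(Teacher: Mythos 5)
Your argument correctly establishes the second and third assertions of the theorem --- the inclusion $\R(X)\supseteq\{\psi\in\cont(X):\psi|_{X_t}\in\R_V(X_t)\ \text{for all}\ t\in\Gamma\}$ and the consequent rational convexity when $\R_V(X_t)=\cont(X_t)$ for all $t$ --- but it does not touch the first assertion, namely the hull inclusion $\hR X\subseteq X\cup\bigcup_{t\in\Gamma}\Om_t$. This is not a corollary of the algebra statement: it is a genuinely geometric bound on the hull that holds even when $\R_V(X_t)\neq\cont(X_t)$ for some $t$, and it is precisely what the later sections use to locate the hull (e.g.\ to show that $\hR K\subseteq K\cup A$ for Rudin's Klein bottle, where the fiber over $t=0$ is \emph{not} rationally convex). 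The paper proves it in two steps: first, given $(z_0,w_0)\in\hR X\setminus X$ with $t_0=f(z_0,w_0)$, a peak function on $\Gamma$ and a representing measure for $(z_0,w_0)$ show that $(z_0,w_0)$ lies in the polynomial hull of $X_{t_0}$, hence (since $f^{-1}(t_0)$ is a one-dimensional variety) in a bounded component $D$ of $f^{-1}(t_0)\setminus X_{t_0}$; second, if $D$ met $V=\{F=0\}$ one lifts via $(z,w)\mapsto(z,w,1/F(z,w))$ into $\C^3$, where the image of $D\setminus V$ becomes an unbounded component of an analytic curve minus $G(X_{t_0})$, so $(z_0,w_0)$ could be separated from $X_{t_0}$ by an element of $\R(X)$, contradicting the representing-measure estimate. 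You would need to supply an argument of this kind; nothing in your localization scheme produces the component structure or rules out components meeting $V$.

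For the part you do prove, your route is correct but genuinely different from the paper's. The paper works dually: it takes an extreme point $\mu$ of the unit ball of $\R(X)^\perp$, uses $\R(\Gamma)=\cont(\Gamma)$ to show $\mu$ is concentrated on a single fiber $X_t$, then uses that $X_t$ is a peak set (so $\R(X)|_{X_t}$ is closed) to extend $\psi|_{X_t}$ to an element of $\R(X)$ and conclude $\int\psi\,d\mu=0$. You instead run a Bishop-style localization with a partition of unity $\{\chi_j\}$ on $\Gamma$ pulled back through $f$; this is more elementary in that it avoids Krein--Milman, extreme annihilating measures, and the peak-set fact that $\R(X)|_{X_t}$ is closed --- you only need $\psi|_{X_t}$ to lie in the \emph{closure} of $\R(X)|_{X_t}$. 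Both approaches exploit $\R(\Gamma)=\cont(\Gamma)$ to make $g\circ f\in\R(X)$ for every $g\in\cont(\Gamma)$, and both use $V\cap X=\varnothing$ to place $1/F^m|_X$ (and hence $G/F^m|_X$) in $\R(X)$. Your bookkeeping in steps (a) and (b) is sound: the tube-lemma step producing $U_t$, the pullback of the partition of unity, and the approximation of $1/F^m$ by $1/p_k^m$ with $p_k$ zero-free on $X$ all work as stated.
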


We must clarify that if $f:\CC\rightarrow\C$ is a rational function of the form $p/q$, where $p$ and $q$ are relatively prime polynomials, then by $f^{-1}(t)$, $t\in\C$, we mean $\{z\in\CC:p(z)=tq(z)\}$.

The above result can also be applied to other examples present in the literature. A few of these have been collected in Section~\ref{sec_other}. In each of these cases, the rational hull comes from an attached annulus. Note that since the above proof only gives a partial description of the rational hull, we need the following well-known principle to complete our computations: {\em Suppose $X\subset \Cn$ is a compact set and $\Sigma\subset\Cn$ is a bordered Riemann surface whose smooth boundary $\bdy\Sigma$ is in $X$, and bounds an oriented surface $S\subset X$ in the following sense: for any smooth $1$-form $\alpha$ defined in a neighborhood of $X\cup\Sigma$, we have
	\bes
		\int_{S}d\alpha=\int_{\bdy\Sigma}\alpha.
	\ees
Then, $\Sigma\subset\hR X$.} In the case of Rudin's Klein bottle, a modified version of this principle applies, where we must allow $S$ to have multiplicity at $\bdy\Sigma$. We provide an alternate argument in Section \ref{sec_klein1}. 

Lastly, our computations show that Rudin's Klein bottle is an example of a surface $S$ with the property that $\hR S\setminus S$ is a smooth $2$-manifold. Examples of $2$-dimensional tori with this property were earlier constructed by Duval and Sibony in \cite{DuSi98} (see Section~\ref{sec_other}). These examples all address a question raised by Alexander Izzo (private communication) of whether such surfaces exist in $\CC$.  
	
\noindent{\bf Acknowledgements.} We would like to extend our gratitude to the anonymous referee whose comments led to the general result (Theorem~\ref{thm_main}) in this article, to Harold Boas whose careful reading of the manuscript vastly improved its presentation, and to Alexander Izzo who played a key role in facilitating this collaboration.

\section{Proof of Theorem~\ref{thm_main}}\label{sec_proof}

We will use the observation that if $Y\subset\Cn$ is compact, then $y\notin\hR Y$ if and only if there is an entire function $g$ on $\Cn$ such that $g(y)\notin g(Y)$. 

\begin{proof} Suppose $f=p/q$ for relatively prime polynomials $p$ and $q$ on $\CC$. Let $(z_0,w_0)\in\hR X\setminus X$. For a compact set $Y$ and a rational
function $g$ with no poles on $Y$, $g(y)\in g(Y)$ for each point $y\in\hR Y$. Thus, $t_0=f(z_0,w_0)\in\Gamma$.
Now, choose a $g\in\cont(\Gamma)$ such that $g(t_0)=1$ and $|g|<1$ on $\Gamma\setminus\{t_0\}$. Then, since $\R(\Gamma)=\cont(\Gamma)$, $(g^n\circ f)h\in\R(X)$ for all $n\in\mathbb N$ and all $h\in\R(X)$. If $\mu$ is a positive representing measure for the point $(z_0,w_0)$ with respect to the algebra $\R(X)$, then 
	\bes
		|h(z_0,w_0)|=\left|\int_Xh(z,w)(g^n\circ f)(z,w)d\mu(z,w)\right|.
	\ees
Letting $n\rightarrow\infty$ yields
	\be\label{eq_pol}
		|h(z_0,w_0)|=\left|\int_{X_{t_0}}h(z,w)d\mu(z,w)\right|\leq \sup_{X_{t_0}}|h|.
	\ee
In particular, \eqref{eq_pol} holds for all polynomials $h:\CC\rightarrow\C$. Thus, $(z_0,w_0)$ is in the polynomial hull of $X_{t_0}$. Since $f^{-1}(t_0)$ is a variety in $\CC$, $(z_0,w_0)$ must be contained in a bounded component $D$ of $f^{-1}(t_0)\setminus X_{t_0}$. 

Now, suppose $V$ meets $D$, where we write $V=\{F=0\}$ for some entire function $F$. Let $G:\CC\setminus V\rightarrow \C^3_{z,w,\zeta}$ be the map 
	\bes
		(z,w)\mapsto \left(z,w,\frac{1}{F(z,w)}\right).
	\ees
Then, $G(D\setminus V)$ is an unbounded component of $V'\setminus G(X_{t_0})$, where 
	\bes
		V'=\{(z,w,\zeta)\in\C^3:p(z,w)=t_0q(z,w),\zeta F(z,w)=1\}.
	\ees
Since $V'$ is an analytic curve, $G(z_0,w_0)\in G(D\setminus V)$ is not in the polynomial hull of $G(X_{t_0})$. So, there is a polynomial $P:\C^3\rightarrow\C$ with $|P\circ G(z_0,w_0)|>\sup_{X_{t_0}}|P\circ G|$. Since $P\circ G\in \R(X)$, this contradicts \eqref{eq_pol}. Thus, $D\subseteq\Om_{t_0}$. 

We now prove the second part of the theorem. We abuse notation to denote $f^{-1}(A)\cap X$ simply by $f^{-1}(A)$, for any set $A\subset\Gamma$. Let $\mu$ be an extreme point of the closed unit ball of $\R(X)^\perp$, the space of finite regular Borel measures orthogonal to $\R(X)$. Then, since $\R(\Gamma)=\cont(\Gamma)$, $\mu$ is supported on a single fiber $X_t$. If not, then there is a decomposition $\Gamma=E\cup E'$ with $E$ and $E'$ measurable subsets of $\Gamma$ with the property that $\mu$ has positive mass on both $f^{-1}(E)=\cup_{t\in E}X_t$ and $f^{-1}(E')=\cup_{t\in E'}X_t$. If $K$ is a compact subset of $E$, and $g$ is a continuous function on $\Gamma$ with $g=1$ on $K$ and $|g|<1$ off $K$, then for each $h\in\R(X)$, we have
	\bes
		\int_X(g^n\circ f)(x)h(x)d\mu(x)=0.
	\ees
As $n\rightarrow\infty$, the integral on the left-hand side converges to $\int_{f^{-1}(K)}h(x)d\mu(x)$. Thus, $\mu|f^{-1}(K)$ is orthogonal to $\R(X)$. This is true for every choice of $K$, so the measure $\mu_E=\mu|f^{-1}(E)\in\R(X)^\perp$. By a similar argument, $\mu_{E'}=\mu|f^{-1}(E')\in\R(X)^\perp$. Since $\mu=\mu_E+\mu_{E'}$, we get a contradiction to the fact that $\mu$ is an extreme point of the closed unit ball of $\R(X)^\perp$. Thus, $\mu$ must be supported on some fiber $X_t$. 

Now, let $\psi\in\cont(X)$ be such that $\psi|{X_t}\in \R_V(X_t)$ for all $t\in\Gamma$. Since, $\psi|{X_t}\in \R_V(X_t)$, there is a sequence $\{g_n\}_{n\in\mathbb N}$ of functions of the form $G/F^m$, $G$ entire and $m\in\mathbb N$ (see \eqref{eq_alg}), so that $\{g_n|{X_t}\}_{n\in\mathbb N}$ converges to $\psi|{X_t}$. Since $\R(\Gamma)=\cont(\Gamma)$, $X_t$ is a peak set for $\R(X)$ and $\R(X)|X_t$ --- the algebra of restrictions to $X_t$ of functions in $\R(X)$ --- is closed. Since each $g_n\in\R(X)$, $\psi|{X_t}\in\R(X)|X_t$. So, there is an $\wt \psi\in \R(X)$ that restricts to $\psi$ on $X_t$. Thus, with $\mu$ an extreme point of the closed unit ball of $\R(X)^\perp$, we have that
	\bes
		\int_X\psi\:d\mu=\int_{X_t}\psi\:d\mu=\int_{X_t}(\psi-\wt \psi+\wt \psi)\:d\mu
			=\int_X\wt \psi\:d\mu=0. 
	\ees
Thus, $\psi\in\R(X)$. If we further have that $\R_V(X_t)=\cont(X_t)$ for all $t\in \Gamma$, then 
	\bes
		\cont(X)
			\supseteq\R(X)\supseteq\{\psi\in\cont(X):\psi|{X_t}\in \cont(X_t)\ \text{for all}\ t\in\Gamma\}
			=\cont(X).
		\ees
Hence, the claim. 
\end{proof}

\section{Rudin's Klein Bottle}\label{sec_klein1}
In this section, we apply Theorem~\ref{thm_main} to compute the rational hull of 
	\be\label{eq_gh}
		K=\left\{(e^{2 i\theta}g^2(\phi),
						e^{i\theta}g(\phi)h(\phi))\in\CC
							:-\pi\leq \theta,\phi<\pi  \right\},
	\ee
where $g(\phi)=a+b\cos\phi$ and $h(\phi)=\sin\phi+i\sin 2\phi$, for some fixed real numbers $0<b<a$.

\begin{theorem}\label{thm_klein1}
The rational hull of $K$ is $K\cup A$, where 
	\bes
		A=\{(z,0)\in\CC: (a-b)^2\leq |z|\leq (a+b)^2\}.
	\ees
Furthermore, if $A^\circ$ denotes the interior of the annulus $A$ relative to  the plane $w=0$, then
\bes
	\R(K)=\{\psi\in\cont(K): \psi\ \text{extends holomorphically to}\ A^\circ\}.
\ees
\end{theorem}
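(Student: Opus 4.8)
The plan is to apply Theorem~\ref{thm_main} with the rational function $f(z,w)=w^2/z$ and the analytic curve $V=\{z=0\}$. A point of $K$ has the form $(z,w)=(e^{2i\theta}g(\phi)^2,e^{i\theta}g(\phi)h(\phi))$, and since $g(\phi)=a+b\cos\phi\geq a-b>0$, the function $f$ has no poles on $K$, $V$ is disjoint from $K$, and $f(z,w)=h(\phi)^2$ there. Writing $c=\cos\phi$ one computes $h(\phi)^2=(1-c^2)(1-4c^2+4ic)$, so $\Gamma=f(K)$ is the image of $[-1,1]$ under a fixed polynomial map; in particular $\Gamma$ is a real-analytic arc (with its endpoints $c=\pm1$, both mapping to $0$, identified) and has $2$-dimensional Lebesgue measure zero, so $\R(\Gamma)=\cont(\Gamma)$ by the Hartogs--Rosenthal theorem. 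A short computation, equating the real and imaginary parts of $h^2(c_1)=h^2(c_2)$ and using $0<b<a$, shows that $c\mapsto h^2(c)$ is \emph{injective} on $(-1,1)$; this is the crux of the fiber analysis below.

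For $t\in\C$ we have $f^{-1}(t)=\{w^2=tz\}$, which for $t\neq0$ is homeomorphic to $\C$ via $w\mapsto(w^2/t,w)$, and on which $V$ meets only the point $(0,0)$, corresponding to $w=0$. By the injectivity just noted, for $t\in\Gamma\setminus\{0\}$ there is a unique $c_t\in(-1,1)$ with $h^2(c_t)=t$, and the two angles $\pm\phi_0$ with $\cos\phi_0=c_t$ trace, via $\theta\mapsto(e^{2i\theta}g(\phi_0)^2,e^{i\theta}g(\phi_0)h(\phi_0))$, one and the \emph{same} circle $\{|w|=g(\phi_0)|t|^{1/2}\}$ in $f^{-1}(t)$ (because $g$ is even, $h$ is odd, and $e^{i(\theta+\pi)}=-e^{i\theta}$). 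Hence $f^{-1}(t)\setminus K_t$ has a single bounded component, the disc $\{|w|<g(\phi_0)|t|^{1/2}\}$, and this disc contains $(0,0)\in V$, so $\Om_t=\emptyset$. For $t=0$ we have $f^{-1}(0)=\{w=0\}$ and $K_0=\{(z,0):|z|\in\{(a-b)^2,(a+b)^2\}\}$; of the bounded components of the complement, the disc $\{|z|<(a-b)^2\}$ meets $V$ while the annulus $\{(a-b)^2<|z|<(a+b)^2\}=A^\circ$ does not, so $\Om_0=A^\circ$. Theorem~\ref{thm_main} now gives $\hR K\subseteq K\cup A^\circ=K\cup A$, the last equality because $\bdy A=K_0\subset K$.

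For the reverse inclusion I would show $A^\circ\subseteq\hR K$ by the winding-number argument behind the spanning-surface principle quoted in the introduction, used with a $2$-chain carrying multiplicity; this is the main obstacle. The smooth map $(\R/2\pi\Z)\times[0,\pi]\to K$, $(\theta,\phi)\mapsto(e^{2i\theta}g(\phi)^2,e^{i\theta}g(\phi)h(\phi))$, is injective on the interior and sends each of the two boundary circles two-to-one onto $C_{\pm}:=\{|z|=(a\pm b)^2,\,w=0\}$ (since $\theta\mapsto e^{2i\theta}$ is two-to-one), the two sheets agreeing in orientation; thus the associated singular $2$-chain $\Sigma$ in $K$ has boundary $2(C_+-C_-)=2\,\bdy A$ as a current. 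Now fix $p=(z_0,0)\in A^\circ$ and suppose $p\notin\hR K$; then there is an entire function $\varphi$ with $\varphi(p)\notin\varphi(K)$, so $\varphi-\varphi(p)$ is zero-free on a neighborhood of $K\supseteq\Sigma$ and $\alpha:=d\varphi/(\varphi-\varphi(p))$ is a closed $1$-form there. Stokes' theorem (pulling $\alpha$ back to the cylinder) gives $2\int_{\bdy A}\alpha=\int_\Sigma d\alpha=0$, whereas the argument principle for $z\mapsto\varphi(z,0)$ on the annulus $A$ gives $\tfrac{1}{2\pi i}\int_{\bdy A}\alpha=\#\{z\in A^\circ:\varphi(z,0)=\varphi(p)\}\geq1$ --- a contradiction. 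Hence $A^\circ\subseteq\hR K$, and with the previous paragraph, $\hR K=K\cup A$.

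Finally, for $\R(K)$: if rational functions $R_n$ without poles on $K$ converge uniformly on $K$ to $\psi$, then, $\hR K$ being $K\cup A$, each polynomial denominator of $R_n$ is zero-free on $A$, so $z\mapsto R_n(z,0)$ is holomorphic on $A^\circ$ and continuous on $A$; by the maximum principle $\sup_A|R_n-R_m|=\sup_{\bdy A}|R_n-R_m|\leq\sup_K|R_n-R_m|\to0$, so $\psi|_{K_0}$ extends to a function continuous on $A$ and holomorphic on $A^\circ$, i.e.\ $\psi$ extends holomorphically to $A^\circ$. For the reverse inclusion I would invoke the second assertion of Theorem~\ref{thm_main}: suppose $\psi\in\cont(K)$ extends holomorphically to $A^\circ$. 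For $t\neq0$, $\psi|_{K_t}\in\cont(K_t)$, which coincides with $\R_V(K_t)$ because $K_t$ is a circle and the Laurent polynomials in $w$ --- being restrictions to the fiber $\{w^2=tz\}$ of functions $G/z^m$ with $G$ entire --- are among the generators of $\R_V(K_t)$ and dense in $\cont(K_t)$. For $t=0$, the holomorphic extension of $\psi$ is continuous on $A$ and holomorphic on $A^\circ$, hence a uniform limit of Laurent polynomials in $z$ (classical rational approximation on the annulus), which restrict to $K_0\subset\{w=0\}$ as functions $G/z^m$ with $G$ entire; so $\psi|_{K_0}\in\R_V(K_0)$. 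Thus $\psi|_{K_t}\in\R_V(K_t)$ for every $t\in\Gamma$, and Theorem~\ref{thm_main} gives $\psi\in\R(K)$. Besides the multiplicity issue in the third paragraph, the only non-elementary inputs are this annulus-approximation fact and the injectivity of $c\mapsto h^2(c)$ on $(-1,1)$, the latter being exactly what prevents additional bounded components in the fibers $f^{-1}(t)$, $t\neq0$, and so keeps the rational hull two-dimensional.
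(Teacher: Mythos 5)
Your proof is correct and follows the paper's overall strategy (same choice of $f(z,w)=w^2/z$, $V=\{z=0\}$, same fiber analysis, same application of Theorem~\ref{thm_main} in both directions), but one genuinely different ingredient deserves comment. For the inclusion $A\subset\hR K$, the paper deliberately avoids the ``spanning surface with multiplicity'' principle it quotes in the introduction (the text says explicitly that it will give an alternate argument): it instead deforms a continuous family of analytic discs $F_\phi(\zeta)=(\zeta^2g^2(\phi),\zeta g(\phi)h(\phi))$, $\phi\in[-\pi,0]$, with boundaries on $K$, and uses that the integer-valued zero count $\phi\mapsto\#\Z(P\circ F_\phi)$ is constant in $\phi$, combined with the identity $\#\Z(P\circ F_0)-\#\Z(P\circ F_{-\pi})=2\,\#\Z(P|A)$ that comes from the two-to-one maps $\zeta\mapsto\zeta^2g^2$. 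You instead run exactly the multiplicity-two Stokes/argument-principle argument that the paper sidesteps, viewing the cylinder $(\R/2\pi\Z)\times[0,\pi]\to K$ as a $2$-chain $\Sigma$ with $\bdy\Sigma=2\,\bdy A$ as a current, pairing the closed $1$-form $d\varphi/(\varphi-\varphi(p))$ against $\Sigma$ via Stokes, and comparing with the argument principle on the flat annulus $A$. Both arguments are sound, both hinge on the same factor of $2$ arising from the nonorientability of $K$ (the doubled boundary), and both count zeros of an entire function; yours is a homological phrasing, the paper's a homotopical one. Your treatment of the fibers is, if anything, a bit more careful than the paper's: you verify explicitly that $c\mapsto h^2(c)=(1-c^2)(1-4c^2+4ic)$ is injective on $(-1,1)$, which the paper leaves implicit when it asserts that each nonzero fiber $K_t$ is a single circle; and you justify $\R(\Gamma)=\cont(\Gamma)$ via Hartogs--Rosenthal rather than leaving it as an assertion. (One small slip: you say injectivity of $c\mapsto h^2(c)$ uses $0<b<a$, but $h$ does not involve $a,b$; the hypothesis plays no role there, only in ensuring $g>0$.) The characterization of $\R(K)$ is handled essentially as in the paper: the forward inclusion follows from the fact that rational functions with poles off $K$ have no poles on $\hR K$ together with the maximum principle on $A$ (the paper phrases this as identifying $\R(K)$ with $\R(\hR K)$, which is the same observation), and the reverse inclusion from Theorem~\ref{thm_main} after computing $\R_V(K_t)$ on each fiber, using Laurent polynomials in $w$ (resp.\ in $z$) on the circular fibers (resp.\ on $K_0=\bdy A$), with the classical annulus approximation theorem at the last step just as in the paper.
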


\begin{proof}
We first show that $A\subset \hR K$. Let $\D$ denote the open unit disc in $\C$. Consider the continuous family of maps $F_\phi:\overline\D\rightarrow\CC$ given by $F_\phi:\zeta\mapsto\big(\zeta^2g^2(\phi),
						\zeta g(\phi)h(\phi)\big)$,
where $\phi\in[-\pi,0]$. Then, $\{F_\phi(\cdot)\}_{[-\pi,0]}$ is a continuous family of holomorphic discs in $\CC$ whose boundaries are attached to $K$. Note that $\zeta\mapsto F_\phi(\zeta)$ is a two-to-one map on $\overline\D\setminus{0}$, when $\phi$ is either $-\pi$ or $0$. Moreover, 
	\beas
		F_{-\pi}(\overline\D)&=&\{(z,0)\in\CC:|z|\leq (a-b)^2\}\\
		F_0(\overline\D)&=&\{(z,0)\in\CC:|z|\leq (a+b)^2\}.
	\eeas
Thus, for any holomorphic function $f$ defined on $\CC$, 
	\be\label{eq_count}
		\#\Z(f\circ F_0)-\#\Z(f\circ F_{-\pi})=2\: \#\Z(f|A),
	\ee
for $f$ nonvanishing on $\bdy A$, where $\#\Z(g)$ denotes the number of zeros of $g$ (counting multiplicities). In particular, suppose $P$ is a polynomial that does not vanish anywhere on $K$. Then, each $f_\phi=P\circ F_\phi$ is holomorphic on $\D$, continuous up to the boundary, and nonvanishing on $\bdy\D$. So,  
		\bes
		G:\phi\mapsto\#\Z(f_\phi) =\frac{1}{2\pi i}\int_{\bdy\D}
				\frac{f_\phi '(\zeta)}{f_\phi(\zeta)}d\zeta
	\ees
is a continuous integer-valued function on $[-\pi,0]$ --- therefore, it must be a constant. From \eqref{eq_count}, we have have that
	\bes
		\#\Z(P|A)=\frac{\#\Z(f_0)-\#\Z(f_{-\pi})}{2}
		=\frac{G(0)-G(-\pi)}{2}= 0.
	\ees
Thus, $P$ cannot vanish on $A$. Since $P$ was an arbitrarily chosen polynomial that doesn't vanish on $K$, $A\subset\hR K$.

For the rest of the proof, we apply Theorem~\ref{thm_main} with 
	\bes
		f(z,w)=\dfrac{w^2}{z},\ V=\{(z,w)\in\CC:z=0\},\ \text{and}\ F(z,w)=z.
	\ees
In this case, the set 
	\bes
		\Gamma=f(K)=\{(\sin\phi+i\sin 2\phi)^2:-\pi\leq \phi<0\}
	\ees
 is a simple closed curve in $\C$, and satisfies $\R(\Gamma)=\cont(\Gamma)$. For $t\in\Gamma$, we let $K_t=f^{-1}(t)\cap K$. Then,
	\bea\label{eq_fiber}
		K_t
			=\begin{cases}
				C_{-\pi}\cup C_0, &\text{when}\ t=0,	\\			
				C_{\phi},\ \text{for some}\
					 \phi\in(0,\pi),\ &\text{when}\  t\neq 0.
			\end{cases}	
	\eea
where 
$C_\phi=\{(e^{2i\theta}g^2(\phi), e^{i\theta}g(\phi)h(\phi)):-\pi\leq \theta<\pi\}$ 
is a circle on $K$.  Indeed, if $t=h^2(\phi)=0$, then $\phi$ is either $0$ or $-\pi$. On the other hand, if $t\neq 0$, then $h^2(\phi)=t$ yields exactly two solutions in the interval $(-\pi,\pi)$, since 
$h(-\phi)=-h(\phi)$. So, $K_t=C_\phi\cup C_{-\phi}$ for some $\phi\in(0,\pi)$. However,
	\beas
		C_{\phi}
		&=&\{(e^{2i\theta}g^2(\phi), e^{i\theta}g(\phi)h(\phi)):\theta\in\rl\}\\
		&=&\{(e^{2i\eta}g^2(-\phi), e^{i\eta}g(-\phi)h(-\phi)):\eta\in\rl\}
		= C_{-\phi}. 
	\eeas 

Now, if $t\neq 0$, then the only bounded component of $f^{-1}(t)\setminus K_t$ is a holomorphic disc intersecting $V$. For $t=0$, the set $f^{-1}(0)\setminus K_0$ consists of two bounded components --- the flat holomorphic disc $\{(z,0),|z|<(a-b)^2\}$, which intersects $V$, and the annulus $A$, which avoids $V$. Thus, by Theorem~\ref{thm_main}, $\hR K\subset K\cup A$.

For the characterization of $\R(K)$, first observe that since it can be identified with $\R(\hR K)$, the inclusion $\R(K)\subseteq \{\psi\in\cont(K): \psi|\bdy A\ \text{extends holomorphically to}\ A^\circ\}$ follows. To get the other inclusion, we need to compute $\R_V(K_t)$ for all $t\in\Gamma$. We claim that
	\bes
		\R_V(K_t)=
					\begin{cases}
				\hol(A), &\text{when}\ t=0,	\\			
					 \cont(K_t),\ &\text{when}\  t\neq 0,
			\end{cases}	
	\ees
where $\hol(A)=\{\psi\in\cont(\bdy A):\psi\ \text{extends holomorphically to}\ A^\circ\}$. Indeed, for a fixed $\phi$, we have that $z^j|K\in\R_V(C_\phi)$ for all positive and negative integers $j$, and $w^\ell|K\in\R_V(C_\phi)$ for all positive integers $\ell$. It follows that $\R_V(C_\phi)$ contains $e^{ik\theta}$ for all integers $k$. So, if $t\neq 0$, then $\R_V(K_t)=\R(K_t)=\cont(K_t)$. On the other hand, if $t=0$, $\R_V(K_t)$ contains (the restrictions to $K_0$ of) all the Laurent polynomials in $z$. 
Thus, $\R_V(K_0)=\hol(A)$. This completes the characterization of $\R(K)$. 
\end{proof}

We now invoke \cite{Ro62} (and the references cited therein)  to obtain the following criterion as a corollary. It is immediate from the corollary that the (finite, regular, Borel) measures orthogonal to $\R(K)$ are those measures on $K$ that are concentrated on $\bdy A$ and that are orthogonal to the algebra $\R(A)$. 
 
\begin{cor}\label{cor_meas} The function $\psi\in\cont(K)$ lies in $\R(K)$ if and only if
$$\int_{\bdy A}\psi(z,0)g(z,0)\, dz=0$$
for all functions $g$ holomorphic on a neighborhood in the $z$--plane of the annulus $A$.
\end{cor}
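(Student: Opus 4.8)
The plan is to deduce Corollary~\ref{cor_meas} directly from Theorem~\ref{thm_klein1} by translating the functional-analytic description of $\R(K)$ into a statement about annihilating measures, and then invoking the classical theory of the annulus. First I would recall that, by Theorem~\ref{thm_klein1}, a function $\psi\in\cont(K)$ lies in $\R(K)$ if and only if its restriction to $\bdy A$ extends holomorphically to $A^\circ$; equivalently, $\psi|\bdy A\in\R(A)$, since for the annulus the uniform closure of rational functions with poles off $A$ coincides with the functions continuous on $A$ and holomorphic on $A^\circ$ (this is the content of \cite{Ro62} together with Mergelyan-type approximation on $A$). So the corollary amounts to the assertion that $\psi|\bdy A\in\R(A)$ if and only if $\int_{\bdy A}\psi(z,0)g(z,0)\,dz=0$ for every $g$ holomorphic in a neighborhood of $A$.

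The forward implication is the easy direction: if $\psi|\bdy A$ extends holomorphically to $A^\circ$ and continuously to $A$, then for $g$ holomorphic near $A$ the $1$-form $\psi(z,0)g(z,0)\,dz$ is holomorphic on $A^\circ$ and continuous up to $\bdy A$, so Cauchy's theorem (or Stokes' theorem, since $d(\psi g\,dz)=0$) gives $\int_{\bdy A}\psi(z,0)g(z,0)\,dz=0$, where $\bdy A$ carries its natural boundary orientation. For the converse I would argue by duality: the measures on $\bdy A$ of the form $g(z,0)\,dz$ with $g$ holomorphic near $A$ are, by the theory of the annulus (again \cite{Ro62}), weak-$*$ dense in the annihilator $\R(A)^\perp$ of the rational algebra of $A$ inside $\cont(\bdy A)^*$ — this is the standard description of annihilating measures for $\R$ of a planar compact set with finitely many complementary components, coming from Cauchy-transform/approximation arguments. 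Hence if $\psi$ annihilates all such $g(z,0)\,dz$, then $\psi|\bdy A$ annihilates all of $\R(A)^\perp$, which by the Hahn--Banach theorem forces $\psi|\bdy A\in\R(A)$, i.e.\ $\psi$ extends holomorphically to $A^\circ$, and then Theorem~\ref{thm_klein1} gives $\psi\in\R(K)$. The final sentence about orthogonal measures then follows by combining this with the first part of Theorem~\ref{thm_klein1} (which identifies $\R(K)$ with $\R(\hR K)=\R(K\cup A)$): a measure on $K$ annihilates $\R(K)$ precisely when it is concentrated on $\bdy A$ and annihilates $\R(A)$.

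The main obstacle is the converse direction, specifically pinning down exactly which classical result from \cite{Ro62} (and the literature it cites) one uses to say that the holomorphic $1$-forms $g(z,0)\,dz$ exhaust, up to weak-$*$ closure, the annihilator of $\R(A)$. One has to be slightly careful that ``$g$ holomorphic in a neighborhood of $A$'' — rather than merely holomorphic on $A^\circ$ and continuous on $A$, or in some Hardy/Smirnov class — already suffices for density; this is true because $A$ has real-analytic boundary and every component of $\C\setminus A$ has nonempty interior, so a standard pole-pushing/Runge argument lets one approximate the more general annihilating kernels by globally-near-$A$ holomorphic ones in the weak-$*$ topology. I would state this as a citation to \cite{Ro62} rather than reprove it. A minor point to get right is the orientation of $\bdy A$ (the outer circle $|z|=(a+b)^2$ counterclockwise, the inner circle $|z|=(a-b)^2$ clockwise) so that Stokes' theorem has the correct sign in the easy direction; with the conventions already fixed in the proof of Theorem~\ref{thm_klein1} via the discs $F_\phi$, this is automatic.
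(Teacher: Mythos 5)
Your argument is correct and tracks the paper's own reasoning: the paper deduces the corollary by reducing, via Theorem~\ref{thm_klein1}, to the classical characterization (cited to \cite{Ro62}) of which continuous functions on $\bdy A$ extend holomorphically to $A^\circ$, and then records the consequence for annihilating measures. Your Cauchy/Stokes argument for the easy direction and the Hahn--Banach/weak-$*$ density fleshing-out of the \cite{Ro62} citation for the converse are exactly what is left implicit in the paper's one-line proof.
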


In \cite{An12}, the first author modifies Rudin's Klein bottle to obtain the following totally real Klein bottle in $\CC$ whose polynomial hull is shown to be of dimension three. 
Let 
	\bes
		K^*=\left\{\left(e^{2 i\theta}g^2(\phi),
						e^{-i\theta}\frac{h(\phi)}{g(\phi)}\right)
							:-\pi\leq \theta,\phi<\pi  \right\},
	\ees
where $g$ and $h$ are as in \eqref{eq_gh}. We claim that the rational hull of $K^*$ is $K^*\cup A$, where $A$ is the annulus defined in Theorem~\ref{thm_main}. Indeed, consider the automorphism 
	\bes 
		J:(z,w)\mapsto\left(z,\frac{w}{z}\right)
	\ees
on the domain $\T=\{(z,w)\in\CC:z\neq 0\}$. The inverse of $J$  is given by $J^{-1}(z,w)=(z,zw)$. It is easy to see that the automorphism $J$ carries $K$ onto $K^*$, $J^{-1}(K^*)=K$, and it acts as the identity  map on the annulus $A$. Furthermore, $J$ effects an isomorphism between $\R(K^*)$ and $\R(K)$: given a rational function $f$ on $\CC$, $f$ is holomorphic on a neighborhood of $K^*$ if and only if $f\circ J$ is holomorphic on a neighborhood of $K$. It follows that $J$ restricts to a bijection between $\hR K$ and $\hR {K^*}$. The same reasoning gives the analogues of Theorem~\ref{thm_klein1} and Corollary~\ref{cor_meas} for $R(K^*)$.

\section{Other examples}\label{sec_other} 
We apply the general technique of this paper to compute the rational hulls of some other surfaces in $\CC$ that have appeared in the literature. The tori considered below are particular instances of a general scheme provided in \cite{DuSi98} to construct a totally real torus in $\CC$ whose rational hull consists precisely of the torus and $n$ attached annuli, where $n$ is any prescribed integer.

\subsection*{Totally real discs} In \cite{HoWe69}, and later in \cite{DuSi95}, totally real discs of the following form are considered: $D=\{(z,w)\in\CC:|z|\leq 1,w=\bar zf(|z|^2)\}$, with $f:[0,1]\rightarrow\C$ a $\cont^1$-smooth function such that $t\mapsto tf(t)$ is an immersion with a double point at some $t_0\in[0,1]$. Let $\alpha=t_0f(t_0)\in\C$. The curve $C=\{zw=\alpha\}$ intersects $D$ in two circles that bound a surface in $D$, and an annulus $A$ in $C$. So, $A\subset\hR D$. Setting $f(z,w)=zw$ and $V$ as any analytic curve that avoids $D$, we can apply Theorem~\ref{thm_main} to obtain that $\hR D=D\cup A$. 
  
\subsection*{Conjugate Hopf Tori} In \cite{DuGa14}, Duval and Gayet provide a dichotomy for any generic totally real unknotted torus embedded in $S^3$ --- either it is rationally convex and fillable by holomorphic discs, or its rational hull contains a holomorphic annulus. As an example of the latter, they consider a family of tori that come naturally from the conjugate Hopf fibration  $\Theta:S^3\rightarrow S^2\subset\C\times\rl$ given by $(z,w)\mapsto (2zw, |z|^2-|w|^2)$. Let $\pi$ denote the projection of $\C\times\rl$ onto $\C$. For any embedded closed curve $\gamma$ in $S^2$ such that $\pi(\gamma)\subset\overline\D$ is immersed, the set 
	\bes
		T^\gamma=\Theta^{-1}(\gamma)
	\ees
is a totally real torus in $S^3$. In the Duval-Gayet examples, $\gamma$ is such that $\pi(\gamma)\subset \overline\D$ is a figure eight avoiding the origin. 

Fix such a $\gamma$ and let $a$ be its point of self-intersection in $\D$. Let $f(z,w)=2zw$, $V=\{z=0\}$ and $F(z,w)=z$.  The fibers $f^{-1}(t)\cap{T^\gamma}$ are polynomially convex circles when $t\neq a$, and the fiber $f^{-1}(a)\cap T^\gamma$ bounds the annulus 
	\bes
		A
			=\left\{(z,w)\in \CC
				:2zw=a;
				 -\sqrt{1-a^2}\leq |z|^2-|w|^2\leq \sqrt{1-a^2} \right\}.
	\ees
Applying Theorem~\ref{thm_main}, and observing that $\bdy A$ bounds a cylinder in $T^\gamma$, we have that 
	\bes
		\hR {T^\gamma}=T^\gamma\cup A
	\ees
and
	\bes
	\R(T^\gamma)
		=\{\psi\in\cont(T^\gamma): \psi\ \text{extends holomorphically to}\ A^\circ\}
	\ees
where $A^\circ$ denotes the interior of the annulus $A$ relative to the variety $\{2zw=a\}$.

\subsection*{Spin Tori} Let $\gamma:[0,1]\rightarrow\C\times\rl$ be a smooth embedded curve, so that if $\gamma(\theta)=(z(e^{i\theta}),r(e^{i\theta}))$, then $\Gamma=z(e^{i\theta})$ is an immersed curve in $\C$ with one point of self-intersection, say $z(e^{i\theta_1})=z(e^{i\theta_2})=z_0$. Consider $T_\gamma:=\{(z(e^{i\theta}),r(e^{i\theta})e^{i\phi}):0\leq \theta,\phi\leq 2\pi\}$. Then, $T_\gamma$ is a totally real torus in $\CC$ and
	\bes
		\hR {T_\gamma}=T_\gamma\cup A
	\ees
and
	\bes
	\R(T_\gamma)
		=\{\psi\in\cont(T_\gamma): \psi\ \text{extends holomorphically to}\ A^\circ\}
	\ees
where $A=\{(z_0,w):\min(r(e^{i\theta_1}),r(e^{i\theta_2}))\leq |w|\leq \max(r(e^{i\theta_1}),r(e^{i\theta_2}))\}$.

\bibliography{RHKleinBib}
\bibliographystyle{plain}
\end{document}